\def \de {\partial}
\def \N {\mathbb{N}}
\def \phi {\varphi}
\def \RN {\mathbb{R}^N}
\def \R {\mathbb{R}}
\def \G{\Gamma}
\def \So {\mathscr{S}}
\newcommand{\sA}{\mathscr A}
\newcommand{\la}{\lambda}
\numberwithin{equation}{section}
\newcommand{\beq}{\begin{equation}}
\newcommand{\bea}[1]{\begin{array}{#1} }
\newcommand{\eeq}{ \end{equation}}
\newcommand{\ea}{ \end{array}}
\newcommand{\Lo}{\mathscr L^{2s,p}}
\newcommand{\As}{(-\mathscr A)^s}
\newcommand{\A}{\mathscr A}
\newcommand{\Ma}{\mathscr M}
\newcommand{\In}{\mathbf 1_E}
\newtheorem{theorem}{Theorem}[section]
\newtheorem{lemma}[theorem]{Lemma}
\newtheorem{proposition}[theorem]{Proposition}
\newtheorem{remark}[theorem]{Remark}
\newtheorem{definition}[theorem]{Definition}
\numberwithin{equation}{section}
\begin{document}

\title[A new proof of the geometric Sobolev embedding etc.]{A new proof of the geometric Sobolev embedding for generalised Kolmogorov operators}



\begin{abstract}
In this note we revisit a result in \cite{GTjfa}, where we established nonlocal isoperimetric inequalities and the related embeddings for Besov spaces adapted to a class of H\"ormander operators of Kolmogorov-type. We provide here a new proof which exploits a weak-type Sobolev embedding established in \cite{GThls}.
\end{abstract}

\author{Nicola Garofalo}

\address{Dipartimento d'Ingegneria Civile e Ambientale (DICEA)\\ Universit\`a di Padova\\ Via Marzolo, 9 - 35131 Padova,  Italy}
\vskip 0.2in
\email{nicola.garofalo@unipd.it}

\author{Giulio Tralli}
\address{Dipartimento d'Ingegneria Civile e Ambientale (DICEA)\\ Universit\`a di Padova\\ Via Marzolo, 9 - 35131 Padova,  Italy}
\vskip 0.2in
\email{giulio.tralli@unipd.it}

\maketitle


\section{Introduction}\label{S:intro}

Consider the Kolmogorov-Fokker-Planck operators in $\RN$, $N\geq 2$, defined as follows
\begin{equation}\label{A0}
\mathscr A u = \operatorname{tr}(Q \nabla^2 u) + \left\langle BX,\nabla u\right\rangle,
\end{equation}
where the $N\times N$ matrices $Q$ and $B$ have real, constant coefficients, $Q = Q^\star \ge 0$, and $X$ stands for the generic point in $\R^N$. The operators $\A$ in \eqref{A0} were introduced in \cite{Ho}, where H\"ormander showed that they are hypoelliptic if and only if the covariance matrix
\begin{equation}\label{Kt}
K(t) = \frac 1t \int_0^t e^{sB} Q e^{s B^\star} ds \quad\mbox{ is positive definite for every $t>0$.}
\end{equation}
This condition will be henceforth assumed throughout the whole paper. Equations such as \eqref{A0} are of considerable interest in physics, probability and finance, and have been the subject of intense study during the past three decades (see the recent survey article \cite{AP}, and the references therein). Besides the classical Laplace equation (which corresponds to the non-degenerate model $Q = I_N$, $B = O_N$), they encompass the Ornstein-Uhlenbeck operator (which is obtained by taking $Q = I_N$ and $B = - I_N$), as well as the degenerate operator of Kolmogorov in $\R^{2n}$
\begin{equation}\label{kolmo}
\A_0 u = \Delta_v u + <v,\nabla_x u >,
\end{equation}
corresponding to the choice $N=2n$, $Q = \begin{pmatrix} I_n & 0_n\\ 0_n& 0_n\end{pmatrix}$, and $B = \begin{pmatrix} 0_n & 0_n\\ I_n & 0_n\end{pmatrix}$. Such operator arises in the kinetic theory of gases and was first introduced in the seminal note \cite{Ko} on Brownian motion. One should note that $\A_0$ fails to be elliptic since it is missing the diffusive term $\Delta_x u$. However, it does satisfy H\"ormander's hypoellipticity condition since one easily checks that $K(t)=\begin{pmatrix} I_n & t/2\ I_n\\ t/2\ I_n& t^2/3\ I_n\end{pmatrix}>0$ for every $t>0$. In this respect, it should be noted that Kolmogorov himself had already shown the hypoellipticity of his operator since in \cite{Ko} he constructed an explicit heat kernel for $\A_0$ which is $C^\infty$ outside the diagonal. Kolmogorov's construction was generalised in \cite{Ho}, where it was shown that the heat kernel for $\A$ can be explicitly written as
\begin{equation}\label{PtKt}
p(X,Y,t) = \frac{\omega_N (4\pi)^{-\frac N2}}{V(t)} \exp\left( - \frac{m_t(X,Y)^2}{4t}\right).
\end{equation}
In \eqref{PtKt}, for $X, Y\in \RN$ we have let, for $t>0$,
\begin{align}\label{defs}
m_t(X,Y) & = \sqrt{\left\langle K(t)^{-1}(Y-e^{tB} X ),Y-e^{tB} X \right\rangle},\nonumber\\
B_t(X,r) &= \{Y\in \RN\mid m_t(X,Y)<r\}\nonumber\\
V(t) &= \left|B_t(X,\sqrt t)\right| = \omega_N  (\det(t K(t)))^{1/2},
\end{align}
where $\omega_N$ indicates the Lebesgue measure $\left|\hphantom{l}\cdot\hphantom{l}\right|$ of the unit ball in $\RN$. If we indicate with 
$$
P_t f(X) = \int_{\RN} p(X,Y,t) f(Y) dY
$$
 the H\"ormander semigroup, then it is well-known that
$$
P_t 1\equiv 1\qquad\mbox{ and }\qquad P^*_t 1\equiv e^{-t \operatorname{tr} B}.
$$
As a consequence, under the assumption that the matrix $B$ of the drift satisfies 
\begin{equation}\label{trace}
\operatorname{tr} B \ge 0,
\end{equation} 
we obtain a non-symmetric semigroup which is contractive in $L^p:=L^p\left(\RN, dX\right)$, $1\le p\le \infty$. Under the condition \eqref{trace}, in a series of papers \cite{GTind, GThls, GTfi, GTjfa, BGT} we have developed some basic functional analytic aspects of the class \eqref{A0}. In particular, since the operators $\mathscr A$ do not possess a variational structure, one focus of our investigation has been a notion of \emph{gradient}. Guiding by the idea that the heat kernel \eqref{PtKt} should encapsulate the geometry underlying $\mathscr A$, we introduced the following class of Besov spaces naturally associated with the semigroup $P_t$.

\begin{definition}
For any $0<s<1$ and $1\le p<\infty$, the Besov space $\mathfrak B^\sA_{s,p}$ is the collection of all functions $f\in L^p$ such that
\begin{equation}\label{sn}
\mathscr N^\sA_{s,p}(f) = \left(\int_0^\infty  \frac{1}{t^{\frac{s p}2 +1}} \int_{\RN} P_t\left(|f - f(X)|^p\right)(X) dX dt\right)^{\frac 1p} < \infty.
\end{equation}
\end{definition}
Under condition \eqref{trace}, we know that smooth functions with compact support are contained and dense in $\mathfrak B^\sA_{s,p}$ (see Proposition \ref{dens} below). Denoting by $\In$ the indicator function of a set $E$, we have introduced the following.
\begin{definition}\label{defper}
Fix $0<s<\frac{1}{2}$. We say that a measurable set $E\subset \R^N$ has finite $s$-\emph{perimeter} if $\In\in \mathfrak B^\sA_{2s,1}$ and we define the $s$-perimeter associated to $\sA$ as
\[
\mathfrak P_{\sA,s}(E) = \mathscr N^\sA_{2s,1}(\In).
\]
\end{definition}
The reader should keep in mind that 
\begin{equation}\label{weighthejump}
\mathscr N^\sA_{2s,1}(\In)=\int_0^{\infty} \frac{1}{t^{1+s}}\|P_t\In-\In\|_1 dt,
\end{equation} 
see in this respect \cite[Corollary 3.6]{GTfi} and \cite[Section 3]{GTjfa}. Formula \eqref{weighthejump} underscores the role of the $1$-parameter family of pseudo-distances $m_t(\cdot,\cdot)$ in \eqref{PtKt} via the $s$-dependent average of $\|P_t\In-\In\|_1$.

When specialized to $\A=\Delta$, up to a renormalizing factor the seminorm $\mathscr N^\sA_{s,p}(\cdot)$ coincides  with the classical Aronszajn-Gagliardo-Slobedetzky seminorm: in such framework it is nowadays a common practice to call \emph{nonlocal perimeter} the Aronszajn-Gagliardo-Slobedetzky seminorm of the indicator function, and we refer the reader to the influential work \cite{CRS} where the structure of the critical points of nonlocal perimeters was first analyzed. The notion of nonlocal perimeter in the Euclidean setting was implicitly present in \cite{AL, Ma1}, and the nonlocal counterpart of the classical De Giorgi's isoperimetric inequality \cite{DGper54} was established in \cite{AL, FS} with respect to such notion.\\
Concerning the general class of operators $\A$ in \eqref{A0}, we established in \cite[Theorem 1.1 and Theorem 1.3]{GTjfa} the following nonlocal isoperimetric inequalities and the related embeddings of the Besov spaces $\mathfrak B^{\A}_{2s,1}$ in the relevant $L^q$-space of functions with higher integrability properties.

\begin{theorem}\label{T:alt1}
Let $0<s< \frac{1}{2}$, and assume \eqref{trace}. Suppose there exist $D, \gamma_D>0$ such that
\begin{equation}\label{vol}
V(t) \geq \gamma_D\ t^{D/2}\quad\mbox{ for all }t>0.
\end{equation}
Then
$$\mathfrak B^{\A}_{2s,1}\hookrightarrow L^{\frac{D}{D-2s}}.$$ 
More precisely, there exists a positive constant $c$ (depending on $N,D,s,\gamma_D$) such that for every $f\in \mathfrak B^{\A}_{2s,1}$ one has
\begin{equation}\label{besunoembe}
||f||_{\frac{D}{D-2s}} \leq \frac{1}{c} \mathscr N^{\A}_{2s,1}(f).
\end{equation}
In particular, for any measurable set $E\subset \RN$ with $|E|<\infty$, one has
$$
\mathfrak P_{\A, s}(E) \geq\ c\ |E|^{(D-2s)/D}.
$$
\end{theorem}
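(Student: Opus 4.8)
The plan is to deduce Theorem~\ref{T:alt1} from a weak-type estimate, which is the strategy announced in the abstract (exploiting the embedding of \cite{GThls}). The first step is to record the weak-type Sobolev embedding from \cite{GThls}: under the volume lower bound \eqref{vol}, the nonlocal operator $\mathscr A$ controls the $L^{p,\infty}$-quasinorm in the critical exponent, i.e. there is $C>0$ such that for all suitable $f$,
\[
\|f\|_{L^{\frac{D}{D-2s},\infty}}\leq C\,\mathscr N^{\A}_{2s,1}(f).
\]
Equivalently, for the special case $f=\In$ this already yields the \emph{weak} isoperimetric inequality $\mathfrak P_{\A,s}(E)\geq c\,|E|^{(D-2s)/D}$ directly, since $\|\In\|_{L^{q,\infty}}=|E|^{1/q}$. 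So the isoperimetric statement requires no further work beyond the weak embedding applied to indicator functions.

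The remaining task is to upgrade from the weak to the strong $L^{\frac{D}{D-2s}}$ embedding for general $f\in\mathfrak B^{\A}_{2s,1}$. The key step is a layer-cake / coarea argument. Write $q=\frac{D}{D-2s}$. By density (Proposition~\ref{dens}) it suffices to treat $f\in C^\infty_0(\RN)$, and by splitting into positive and negative parts one may assume $f\ge 0$. Set $E_\lambda=\{f>\lambda\}$ for $\lambda>0$. The crucial subadditivity property is that the seminorm $\mathscr N^{\A}_{2s,1}$, being defined through $P_t(|f-f(X)|)$ which is $L^1$ in $X$ and integrated against a positive measure in $t$, obeys a coarea-type inequality
\[
\mathscr N^{\A}_{2s,1}(f)=\int_0^\infty \frac{1}{t^{1+s}}\|P_t f-f\|_1\,dt\ \geq\ \int_0^\infty \mathfrak P_{\A,s}(E_\lambda)\,d\lambda,
\]
which follows from $f(X)=\int_0^\infty \In[E_\lambda](X)\,d\lambda$, the linearity of $P_t$, and $|P_tf(X)-f(X)|=\big|\int_0^\infty (P_t\In[E_\lambda]-\In[E_\lambda])(X)\,d\lambda\big|$ together with Fubini and a sign/monotonicity argument on the jump (one uses that $P_t$ is positivity-preserving and that the nested sets $E_\lambda$ make the integrand's sign coherent, as in \cite{GTfi,GTjfa}). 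Then apply the weak isoperimetric inequality $\mathfrak P_{\A,s}(E_\lambda)\geq c\,|E_\lambda|^{1/q}$ inside the $\lambda$-integral and invoke the classical Maz'ya-type lemma: $\big(\int_0^\infty |E_\lambda|^{1/q}\,d\lambda\big)\geq c_q\|f\|_q$ for $q\ge 1$. Chaining these gives \eqref{besunoembe}, hence the strong embedding.

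I expect the main obstacle to be the coarea inequality for $\mathscr N^{\A}_{2s,1}$: one must justify interchanging the $\lambda$-integral with the $X$- and $t$-integrals and, more delicately, control the absolute value $|P_tf-f|$ by the $\lambda$-integral of $|P_t\In[E_\lambda]-\In[E_\lambda]|$ without losing a constant. The clean way is to use the identity $|P_tf(X)-f(X)|=\int_0^\infty \operatorname{sgn}\!\big(\text{coherent sign}\big)\,(P_t\In[E_\lambda]-\In[E_\lambda])(X)\,d\lambda$ is \emph{not} generally true pointwise, so instead one should argue: since $P_t$ is a sub-Markovian kernel, $\|P_tf-f\|_1 = \int_{\RN}\!\big|\!\int_{\RN} p(X,Y,t)(f(Y)-f(X))\,dY\big|\,dX$, and then $f(Y)-f(X)=\int_0^\infty(\In[E_\lambda](Y)-\In[E_\lambda](X))\,d\lambda$ where for each fixed pair $(X,Y)$ the integrand has constant sign in $\lambda$ (it is $+1$ on the interval $(f(X),f(Y))$ if $f(Y)>f(X)$, and symmetrically otherwise). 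Hence the absolute value passes through the $\lambda$-integral as an equality, $\big|\!\int_0^\infty(\cdots)\,d\lambda\big|=\int_0^\infty|\In[E_\lambda](Y)-\In[E_\lambda](X)|\,d\lambda$, and Tonelli then delivers $\|P_tf-f\|_1\le \int_0^\infty\|P_t\In[E_\lambda]-\In[E_\lambda]\|_1\,d\lambda$. Dividing by $t^{1+s}$ and integrating in $t$ closes the argument; everything else is the standard weak-to-strong real-interpolation bookkeeping.
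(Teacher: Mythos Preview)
Your overall strategy—weak Sobolev embedding $\Rightarrow$ isoperimetric inequality for indicators $\Rightarrow$ (via coarea and the Maz'ya-type layer-cake bound) strong embedding—is exactly the circle of ideas the paper implements. In fact your route to the isoperimetric step is \emph{shorter} than the paper's: you apply Theorem~\ref{T:sob} directly to $\mathbf 1_{E_\sigma}$ (legitimate, since $\mathbf 1_{E_\sigma}\in\mathfrak B^{\A}_{2s,1}\hookrightarrow\mathscr L^{2s,1}$ for a.e.\ $\sigma$ by coarea and \eqref{conbound}), whereas the paper takes a detour through the regularisation $P_t\mathbf 1_{E_\sigma}$ and the auxiliary Lemma~\ref{Lemma:interiorblowup}, applying the weak estimate at level $\lambda=1/2$ and passing to the limit $t\to 0^+$. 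Your direct approach even saves a factor of $2$ in the constant.

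There is, however, a genuine error in your treatment of the coarea step. You write
\[
\mathscr N^{\A}_{2s,1}(f)=\int_0^\infty \frac{1}{t^{1+s}}\|P_t f-f\|_1\,dt,
\]
but this identity is valid only for indicators (this is precisely \eqref{weighthejump}); for general $f$ the seminorm is $\int_0^\infty t^{-1-s}\int_{\RN} P_t(|f-f(X)|)(X)\,dX\,dt$, which dominates the quantity you wrote. Your final paragraph then carefully proves
\[
\|P_tf-f\|_1\ \le\ \int_0^\infty\|P_t\mathbf 1_{E_\lambda}-\mathbf 1_{E_\lambda}\|_1\,d\lambda,
\]
which after integration in $t$ yields $\int_0^\infty t^{-1-s}\|P_tf-f\|_1\,dt\le \int_0^\infty\mathfrak P_{\A,s}(E_\lambda)\,d\lambda$—the \emph{opposite} direction from the one you need and the one you announced. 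So as written the chain does not close.

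The fix is immediate once you use the correct seminorm: from $|f(Y)-f(X)|=\int_0^\infty|\mathbf 1_{E_\lambda}(Y)-\mathbf 1_{E_\lambda}(X)|\,d\lambda$ and Tonelli (applied to the double integral $\iint p(X,Y,t)|f(Y)-f(X)|\,dY\,dX$, where there is no absolute value to pass through any integral sign) one obtains the coarea formula as an \emph{equality},
\[
\mathscr N^{\A}_{2s,1}(f)=\int_0^\infty\mathfrak P_{\A,s}(E_\lambda)\,d\lambda,
\]
which is Proposition~\ref{coar}. Your elaborate discussion of signs in the last paragraph is then unnecessary, and the rest of your argument (isoperimetric bound on each $E_\lambda$, then $\int_0^\infty |E_\lambda|^{1/q}\,d\lambda\ge\|f\|_q$ for $q>1$, then density) goes through unchanged.
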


In Theorem \ref{T:alt1} the assumption \eqref{vol} on the growth of the volume function $V(t)$ is crucial as it allows to detect the dimensional parameter $q=\frac{D}{D-2s}$, and it deserves a detailed explanation. If we first take a look at the Kolmogorov example $\A_0$ in $\R^{2n}$ recalled in \eqref{kolmo}, the constant $D=4n$ is determined by the direct computation $V(t)=\omega_{2n}12^{-\frac{n}{2}}t^{2n}$. More generally, the fact that one can uniquely identify the constant $D$ in \eqref{vol} is shared by the subclass of \eqref{A0} which possesses invariances with respect to a family of non-isotropic dilations since in this case $V(t)\equiv V(1)t^{\frac{D}{2}}$ (to fix the ideas, for $\A_0$ the dilations are defined by $\delta_\lambda(v,x)=(\lambda v, \lambda^3 x)$): such homogeneous class was introduced in \cite{LP} where Lanconelli and Polidoro provided an explicit characterization in terms of the matrices $Q$ and $B$. Besides the homogeneous case, for the general class \eqref{A0} one can always say
\begin{equation}\label{D0}
\exists D_0\geq N\,\, \mbox{ such that }\,\, V(t)\ \cong\ t^{D_0/2}\,\, \mbox{ as  }t\to 0^+,
\end{equation}
see \cite[Section 2.4]{GThls}. We refer to the number $D_0$ as the intrinsic dimension of the semigroup $\{P_t\}_{t>0}$ at zero. With this perspective in mind, the validity of the assumption \eqref{vol} readily implies that $D\geq D_0$ (which in particular ensures $D>2s$ and $\frac{D}{D-2s}>1$) and it underscores the importance of the behaviour of $V(t)$ for large $t$. For example, the operator
$$
\Delta_v   + \left\langle v,\nabla_v \right\rangle + \left\langle v,\nabla_x \right\rangle\qquad\mbox{ in }\R^{2n}
$$
fits the assumptions of Theorem \ref{T:alt1} for every $D\geq 4n$: as a matter of fact one can check that in this case $V(t)=\omega_{2n}\left(2e^{t} - \frac{t}{2}-1+\frac{t}{2}e^{2t}-e^{2t}\right)^{n}$ which tells that $D_0=4n$ and $V(t)$ grows exponentially fast at $t\to \infty$. In \cite[Definition 3.4]{GThls} we introduced the notion of intrinsic dimension at infinity of the semigroup $\{P_t\}_{t>0}$ in order to handle the behaviour for large $t$ of the volume function. We are going to recall such notion in Definition \ref{D:hld} below, and we denote by $D_\infty$ such a constant. It turns out that $D_\infty \in [2,\infty]$ under assumption \eqref{trace}, and the validity of \eqref{vol} forces $D_\infty\geq D_0$ and $D\in [D_0,D_\infty]$. This gives a way to understand the operators $\A$ which do not satisfy the volume growth condition \eqref{vol}. In fact, we can consider the following operator
$$
\de^2_v  + v\de_x  - x\de_v\qquad\mbox{ in }\R^{2}
$$
for which $V(t)=\pi\left(\frac{t^2}{4}+\frac{1}{8}\left(\cos(2t)-1\right)\right)^{\frac{1}{2}}$: in this situation we thus have $D_0=4>D_\infty=2$ and therefore Theorem \ref{T:alt1} cannot apply to such case. In \cite[Theorem 1.2 and Theorem 7.6]{GTjfa} we established the following substitute result to treat the operators \eqref{A0} with $D_0> D_\infty$.

\begin{theorem}\label{T:alt2}
Let $0<s< \frac{1}{2}$. Assume \eqref{trace}, and $D_0> D_\infty$. Suppose there exists $\gamma>0$ such that
\begin{equation}\label{vol2}
V(t) \geq \gamma \min\{ t^{D_0/2},t^{D_\infty/2} \}.\quad\mbox{ for all }t>0.
\end{equation}
Then
$$\mathfrak B^{\A}_{2s,1}\hookrightarrow L^{\frac{D_0}{D_0-2s}}+L^{\frac{D_\infty}{D_\infty-2s}}.$$ 
More precisely, there exists a positive constant $c$ (depending on $N,D_0,D_\infty,s,\gamma$) such that for every $f\in \mathfrak B^{\A}_{2s,1}$ one has
\begin{equation}\label{besdueembe}
||f||_{L^{\frac{D_0}{D_0-2s}}+L^{\frac{D_\infty}{D_\infty-2s}}} \leq \frac{1}{c} \mathscr N^{\A}_{2s,1}(f).
\end{equation}
Moreover, there exists a positive constant $\tilde{c}$ (depending on $N,D_0,D_\infty,s,\gamma$) such that for any measurable set $E\subset \RN$ with $|E|<\infty$, one has
$$
\mathfrak P_{\A , s}(E) \geq\ \tilde{c} \min\left\{|E|^{\frac{D_0-2s}{D_0}}, |E|^{\frac{D_\infty-2s}{D_\infty}}\right\}.
$$
\end{theorem}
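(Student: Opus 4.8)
The plan is to deduce the strong embedding \eqref{besdueembe} from the \emph{weak-type} Sobolev embedding established in \cite{GThls} by means of a dyadic truncation, and then to obtain the isoperimetric inequality by specializing to indicator functions. Set $q_0=\tfrac{D_0}{D_0-2s}$ and $q_\infty=\tfrac{D_\infty}{D_\infty-2s}$; note that $q_0,q_\infty>1$ and, since $D_0>D_\infty$, that $q_0<q_\infty$. By the density of $C^\infty_0(\RN)$ in $\mathfrak B^{\A}_{2s,1}$ (Proposition \ref{dens}), by the fact that $\|\cdot\|_{L^{q_0}+L^{q_\infty}}$ depends only on $|f|$, and by $\mathscr N^{\A}_{2s,1}(|f|)\le\mathscr N^{\A}_{2s,1}(f)$ (a consequence of $\big||f(Y)|-|f(X)|\big|\le|f(Y)-f(X)|$), it suffices to prove \eqref{besdueembe} for a nonnegative $f\in C^\infty_0(\RN)$, the general case following by a standard approximation. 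I will use the weak-type embedding of \cite{GThls} in the form $\mathfrak B^{\A}_{2s,1}\hookrightarrow L^{q_0,\infty}+L^{q_\infty,\infty}$, namely: there is $C_1=C_1(N,D_0,D_\infty,s,\gamma)>0$ such that for all $g\in\mathfrak B^{\A}_{2s,1}$ and all $\lambda>0$
\[
\big|\{\,|g|>\lambda\,\}\big|\ \le\ C_1\,\max\left\{\Big(\tfrac{\mathscr N^{\A}_{2s,1}(g)}{\lambda}\Big)^{q_0},\ \Big(\tfrac{\mathscr N^{\A}_{2s,1}(g)}{\lambda}\Big)^{q_\infty}\right\}.
\]
(If \cite{GThls} phrases its weak bound through $(-\A)^s$, one passes to this form via the subordination formula $(-\A)^sg=c_s\int_0^\infty t^{-1-s}(P_tg-g)\,dt$ and the pointwise bound $\|P_tg-g\|_1\le\int_{\RN}P_t(|g-g(X)|)(X)\,dX$, which give $\|(-\A)^sg\|_1\le c_s\,\mathscr N^{\A}_{2s,1}(g)$.)

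The core step is the dyadic decomposition $f=\sum_{k\in\mathbb Z}g_k$, where $g_k=\eta_k\circ f$ and $\eta_k(u)=\min\big((u-2^k)_+,2^k\big)$. Each $\eta_k$ is nondecreasing with $\sum_k\eta_k(u)=u$ on $[0,\infty)$, hence $\sum_k|g_k(Y)-g_k(X)|=|f(Y)-f(X)|$ for every $(X,Y)$; since the triple integral defining $\mathscr N^{\A}_{2s,1}$ has a nonnegative integrand, Tonelli's theorem yields the key identity
\[
\sum_{k\in\mathbb Z}\mathscr N^{\A}_{2s,1}(g_k)\ =\ \mathscr N^{\A}_{2s,1}(f)\ =:\ M.
\]
Writing $a_k=\big|\{f>2^k\}\big|$ and $m_k=\mathscr N^{\A}_{2s,1}(g_k)$, and using $\{f>2^{k+1}\}\subseteq\{g_k=2^k\}\subseteq\{g_k>2^{k-1}\}$, the weak-type inequality applied to $g_k$ at level $2^{k-1}$ gives, with $C_2=2^{q_\infty}C_1$,
\[
a_{k+1}\ \le\ C_2\Big[\big(2^{-k}m_k\big)^{q_0}+\big(2^{-k}m_k\big)^{q_\infty}\Big].
\]

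Then I would split $f=f\mathbf 1_{\{f>M\}}+f\mathbf 1_{\{f\le M\}}$ and estimate the first summand in $L^{q_0}$ and the second in $L^{q_\infty}$ — this being the correct matching of exponents, since in $L^{q_0}+L^{q_\infty}$ with $q_0<q_\infty$ the large values of a function are carried by the smaller exponent. Expressing $\|f\mathbf 1_{\{f>M\}}\|_{q_0}^{q_0}$ and $\|f\mathbf 1_{\{f\le M\}}\|_{q_\infty}^{q_\infty}$ through the layer-cake formula, comparing with the dyadic sums $\sum_{2^k\gtrsim M}2^{kq_0}a_k$ and $\sum_{2^k\lesssim M}2^{kq_\infty}a_k$, substituting the bound for $a_{k+1}$, and using $\sum_k\alpha_k^{\,q}\le\big(\sum_k\alpha_k\big)^q$ for $q\ge1$ together with the monotonicity of $k\mapsto 2^{k(q_\infty-q_0)}$, one finds that the diagonal terms contribute $M^{q_0}$ and $M^{q_\infty}$ respectively, while the remaining terms (the one carrying the exponent $q_\infty$ on the left piece and the one carrying $q_0$ on the right piece) form geometric series in $k$ that are reabsorbed precisely because the threshold has been taken equal to $M=\mathscr N^{\A}_{2s,1}(f)$. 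Thus $\|f\|_{L^{q_0}+L^{q_\infty}}\le\|f\mathbf 1_{\{f>M\}}\|_{q_0}+\|f\mathbf 1_{\{f\le M\}}\|_{q_\infty}\le\tfrac1c\,\mathscr N^{\A}_{2s,1}(f)$, which is \eqref{besdueembe}. Finally, applying \eqref{besdueembe} to $f=\mathbf 1_E$ (with $\mathscr N^{\A}_{2s,1}(\mathbf 1_E)=\mathfrak P_{\A,s}(E)$, and the inequality being trivial when this is infinite) and using $\|\mathbf 1_E\|_{L^{q_0}+L^{q_\infty}}\simeq\min\{|E|^{1/q_0},|E|^{1/q_\infty}\}=\min\{|E|^{\frac{D_0-2s}{D_0}},|E|^{\frac{D_\infty-2s}{D_\infty}}\}$ yields the isoperimetric inequality.

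The approximation and the comparison of $L^q$-norms with dyadic sums are routine; the one structural ingredient, which pins the method to $p=1$, is the identity $\sum_k\mathscr N^{\A}_{2s,1}(g_k)=\mathscr N^{\A}_{2s,1}(f)$, since for $p>1$ the truncations are no longer ``additive'' for the seminorm. The step I expect to require the most care is the last one: organizing the four dyadic sums so that the two off-diagonal contributions are summable against $M$ with no loss, which is exactly where the hypothesis $D_0>D_\infty$ (equivalently $q_0<q_\infty$) and the two-branch profile in \eqref{vol2} — responsible for the two dimensional regimes appearing in the weak-type bound — are decisively used.
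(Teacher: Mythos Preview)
Your argument is correct, and the key identity $\sum_k \mathscr N^{\A}_{2s,1}(g_k)=\mathscr N^{\A}_{2s,1}(f)$ together with the dyadic estimates you sketch does yield \eqref{besdueembe}; the isoperimetric inequality then follows since $\|\mathbf 1_E\|_{L^{q_0}+L^{q_\infty}}\ge c\min\{|E|^{1/q_0},|E|^{1/q_\infty}\}$ is elementary (split $E$ into $\{|f_1|\ge 1/2\}$ and its complement). Your route, however, is genuinely different from the paper's. The paper works through the \emph{coarea formula} (Proposition~\ref{coar}): for smooth $f$ it first proves the pointwise-in-$\sigma$ bound $\min\{G(\sigma)^{1/q_0},G(\sigma)^{1/q_\infty}\}\le C\,\mathfrak P_{\A,s}(E_\sigma)$ and then integrates in $\sigma$. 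That bound is obtained by applying the weak Sobolev inequality not to $\mathbf 1_{E_\sigma}$ but to the smoothed functions $P_t\mathbf 1_{E_\sigma}\in\mathscr L^{2s,1}$, and then letting $t\to 0^+$ via Lemma~\ref{Lemma:interiorblowup} and the limit~\eqref{onelimit}. The subsequent splitting of $f$ is done at the threshold $\sigma_f=\sup\{\sigma:G(\sigma)>1\}$ (where the level sets cross measure~$1$) rather than at $M=\mathscr N^{\A}_{2s,1}(f)$, and the isoperimetric statement is reached through a further mollification step. Your dyadic truncation is the discrete counterpart of the coarea formula and has the advantage of applying the weak embedding directly to the Lipschitz pieces $g_k\in\mathfrak B^{\A}_{2s,1}$, thus bypassing Lemma~\ref{Lemma:interiorblowup}, the $P_t$-regularisation, and the mollifier argument for the isoperimetric part. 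The paper's approach, in turn, isolates the isoperimetric bound on each level set as the central geometric step, which makes the circle ``weak embedding $\Rightarrow$ isoperimetric $\Rightarrow$ strong embedding'' explicit and closer in spirit to the Ledoux--Maz'ya viewpoint discussed in the introduction.
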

We notice that, under the assumptions of Theorem \ref{T:alt2}, we have $D_0> D_\infty>2s$ and then $\frac{D_\infty}{D_\infty-2s}>\frac{D_0}{D_0-2s}>1$. The proofs of Theorem \ref{T:alt1} and Theorem \ref{T:alt2} which can be found in \cite{GTjfa} were inspired by the powerful and flexible semigroup approach to isoperimetric inequalities which is due to Ledoux \cite{Led} in the local case. We also refer the interested reader to \cite{Pre, MPPP} for more insights on such a heat-kernel approach to perimeters and isoperimetric properties. The purpose of the present note is to provide a different proof of Theorems \ref{T:alt1} and \ref{T:alt2} which instead relies on an embedding in a weak $L^q$-space 
of a fractional $(p=1)$-Sobolev space: these Sobolev spaces are tailored on the fractional powers of $\A$ and their relevant embeddings were established in \cite{GThls}. For a proper historical perspective concerning the classical Sobolev spaces, we recall that from the representation formula $|f(X)|\le C(N) \int_{\RN} \frac{|\nabla f(Y)|}{|X-Y|^{N-1}} dY$, and the $L^1$-mapping properties of the Riesz potentials, one knows that $W^{1,1} \hookrightarrow L^{N/(N-1),\infty}$. A remarkable aspect of the end-point case $p=1$ is that such weak Sobolev embedding in fact implies the classical isoperimetric inequality $P(E) \ge C_N |E|^{\frac{N-1}N}$. The latter, in turn, combined with the coarea formula, is equivalent to the strong embedding $W^{1,1}\hookrightarrow L^{N/(N-1)}$. This establishes the beautiful fact that, in the geometric case $p=1$, the weak Sobolev embedding is equivalent to the strong one, and they are both equivalent to the isoperimetric inequality, see \cite{Ma2}. The main focus of this paper is a semigroup generalisation of this circle of ideas to the nonlocal degenerate setting of the operators $\A$.

\section{Preliminaries}\label{S:Sob}

In this section we recall the main ingredients we shall need in the proofs of Theorems \ref{T:alt1} and \ref{T:alt2} that we present in Section \ref{S:proof}. The main character in our analysis is the kernel in \eqref{PtKt}. As we mentioned it is well-known that $p(X,Y,t)$ is the heat kernel, i.e. that $p(\cdot, Y,\cdot)$ is a solution of the heat equation $\A u=\de_t u$ in $\RN\times (0,\infty)$ for any $Y\in\RN$ and $p(X,\cdot,t)$ tends to the Dirac delta $\delta_X$ in the distributional sense as $t\to 0^+$ for any $X\in\R^N$. In particular we are going to exploit the following property concerning such an approximation of the identity
\begin{equation}\label{delta}
P_t \varphi(X) \underset{t\to 0^+}{\longrightarrow} \varphi(X)\qquad \mbox{ for every }\phi\in L^\infty\cap\, C\left(\R^N\right) \mbox{ and }X\in\R^N.
\end{equation}
The limit in \eqref{delta} can be verified directly using the Markovian condition $P_t 1\equiv 1$ and the properties of the positive definite matrices $K(t)$ in \eqref{Kt} (see, e.g., \cite[Proposition 2.1]{GTker}; see also the analytic and probabilistic tools in \cite{DFP, Men} for a treatment of a more general class of operators with varying coefficients).\\
It is very convenient for us to exploit the notations we adopted with the explicit expression \eqref{PtKt}, as in this way we can put the $1$-parameter family of pseudoballs $B_t(X,\sqrt{t})$ at the center stage together with their volume function $V(t)$ in \eqref{defs}. We already stressed that, as a by product of the analysis in \cite{LP}, the small-time behaviour of $V(t)$ is governed by a suitable infinitesimal homogeneous structure which we encode in the number $D_0$ defined via \eqref{D0} (i.e. the intrinsic dimension of the semigroup $\{P_t\}_{t>0}$ at zero). On the other hand, the following definition allows us to handle the large-time behaviour of $V(t)$.

\begin{definition}\label{D:hld}
Consider the set 
$$\Sigma_\infty = \left\{\alpha>0\,\big|\, \int_1^\infty \frac{t^{\frac{\alpha}{2}-1}}{V(t)} dt < \infty\right\}.$$
We call the number  
$$D_\infty = \sup \Sigma_\infty$$
the \emph{intrinsic dimension at infinity} of the semigroup $\{P_t\}_{t>0}$. 
\end{definition}
Thanks to the study of the large-time behaviour of the eigenvalues of $tK(t)$ performed in \cite[Section 3]{GThls} (see also \cite[Proposition 2.3]{BGT}), we know that $2\leq D_\infty\leq \infty$ under the assumption \eqref{trace}.

\subsection{Fractional powers of $\mathscr A$, and their Sobolev embeddings}\label{sub1}
In \cite{GTind} we developed a fractional calculus for $\A$. On functions belonging to the Schwartz class $\So$ the nonlocal operator $\As$, for $0<s<1$, is defined through the following pointwise formula
\begin{equation}\label{defAs}
(-\mathscr A)^s f(X) =  - \frac{s}{\G(1-s)} \int_0^\infty t^{-(1+s)} \left[P_t f(X) - f(X)\right] dt,\qquad X\in\RN.
\end{equation}
If $\operatorname{tr} B \ge 0$, the formula in \eqref{defAs} defines in fact an $L^p$-function for any $1\leq p<\infty$. We can then extend the operator $\As$ to a closed operator on its domain in $L^p$ endowed with the graph norm: this is precisely what we are doing with the following definition.
\begin{definition}
Let $1 \leq p <\infty$, $0<s<1$, and assume \eqref{trace}. We define the Sobolev space $\Lo$ as
$$
\Lo = \left\{f\in L^p\mid \As f \in L^p\right\}
$$
with
$$
\|f\|_{\Lo}=\|f\|_p+\|\As f\|_p.
$$
\end{definition}
Thanks to a density result the Banach space $\Lo$ coincides with the completion of $\So$ with respect to $\|\cdot\|_{\Lo}$, see in this respect \cite[Definition 4.4 and Proposition 4.6]{GThls} as well as \cite[Proposition 2.13]{GTjfa}.\\
In \cite[Theorem 7.5 and Theorem 7.7]{GThls} we proved the following

\begin{theorem}\label{T:sob}
Let $0<s< 1$, and assume \eqref{trace}.
\begin{itemize}
\item[(a)] If \eqref{vol} hold, then we have $\mathscr L^{2s,1}\ \hookrightarrow\ L^{\frac{D}{D-2s},\infty}$. More precisely, there exists a constant $S_{1,s} >0$, depending on $N,D,s,\gamma_D$, such that for any $f\in \mathscr L^{2s,1}$ one has
\begin{equation}\label{weak11}
\underset{\la>0}{\sup}\ \la |\{X\in \RN\mid |f(X)| > \la\}|^{\frac{D-2s}{D}} \le S_{1,s} \|\As f\|_{1}.
\end{equation}
\item[(b)] If instead $D_0> D_\infty$ and \eqref{vol2} hold, we have $\mathscr L^{2s,1}\ \hookrightarrow\ L^{\frac{D_0}{D_0-2s},\infty} + L^{\frac{D_\infty}{D_\infty-2s},\infty}$. More precisely, there exists a constant $S_{1,s} >0$, depending on $N,D_\infty, D_0, s,\gamma$, such that for any $f\in \mathscr L^{2s,1}$ one has
\begin{align}\label{weak12}
\min\{\underset{\la>0}{\sup}\ \la\ |\{X\mid |f(X)|>\la\}|^{\frac{D_0-2s}{D_0}}, \underset{\la>0}{\sup}\ \la\ |\{X\mid | f(X)|>\la\}|^{ \frac{D_\infty -2s}{D_\infty}} \}& \nonumber\\
\leq S_{1,s} \|\As f\|_{1}.&
\end{align}
\end{itemize}
\end{theorem}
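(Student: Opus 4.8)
The plan is to deduce Theorem \ref{T:sob} from a weak-type mapping property of the Riesz-type potential attached to $\A$, together with the inversion formula for fractional powers. Write
\[
(-\A)^{-s} g(X) = \frac{1}{\G(s)}\int_0^\infty t^{s-1} P_t g(X)\,dt,\qquad X\in\RN,
\]
which is finite a.e.\ on $L^1$ as soon as $\int_1^\infty t^{s-1}V(t)^{-1}\,dt<\infty$; this holds under \eqref{vol} (resp.\ \eqref{vol2}) since in either case $D>2s$ (resp.\ $D_\infty>2s$). The first step is to invoke the subordination identity $(-\A)^{-s}(-\A)^s f = f$, which follows on $\So$ from the fractional calculus of \cite{GTind} and, once $(-\A)^{-s}$ is known to be bounded from $L^1$ into the target weak space, extends to all of $\mathscr L^{2s,1}$ by density of $\So$ (\cite{GThls}); weak-type bounds pass to limits in measure, so this extension is harmless. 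Hence it suffices to prove, for $g\in L^1$ and $f=(-\A)^{-s}g$, the estimates \eqref{weak11}, resp.\ \eqref{weak12}, with $\|g\|_1$ replacing $\|\As f\|_1$; and since $P_t$ is positivity preserving we may take $g\ge 0$.

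The argument is then a Hedberg-type balancing fed by two ingredients: the $L^1$-contractivity $\|P_t g\|_1\le\|g\|_1$ (valid under \eqref{trace}, as $P_t^\star 1=e^{-t\operatorname{tr}B}\le 1$), and the on-diagonal bound $p(X,Y,t)\le \omega_N(4\pi)^{-N/2}V(t)^{-1}$, which is immediate from \eqref{PtKt} and yields $\|P_t g\|_\infty\le C_N V(t)^{-1}\|g\|_1$. For a truncation level $\tau>0$ split $(-\A)^{-s}g=G_\tau+H_\tau$ according to whether the $t$-integral runs over $(0,\tau)$ or $(\tau,\infty)$; then $\|G_\tau\|_1\le \G(s+1)^{-1}\tau^{s}\|g\|_1$ and $\|H_\tau\|_\infty\le C_N\G(s)^{-1}\|g\|_1\int_\tau^\infty t^{s-1}V(t)^{-1}\,dt$. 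Under \eqref{vol} the tail is $\lesssim \tau^{s-D/2}$, so for each $\lambda$ one picks $\tau=\tau(\lambda)\simeq(\|g\|_1/\lambda)^{2/(D-2s)}$ to force $H_\tau\le\lambda/2$ pointwise; then $\{|f|>\lambda\}\subset\{G_\tau>\lambda/2\}$ and Chebyshev gives $|\{|f|>\lambda\}|\le 2\|G_\tau\|_1/\lambda\lesssim \tau^{s}\|g\|_1/\lambda\simeq(\|g\|_1/\lambda)^{D/(D-2s)}$, which is exactly \eqref{weak11}. The dimensional exponent is produced entirely by the volume growth \eqref{vol}.

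For part (b) the scheme persists, but under \eqref{vol2} the tail integral has two regimes, $\int_\tau^\infty t^{s-1}V(t)^{-1}\,dt\lesssim \tau^{s-D_0/2}$ for $\tau\le 1$ and $\lesssim \tau^{s-D_\infty/2}$ for $\tau\ge 1$. The cleanest organisation is to split the potential itself as $(-\A)^{-s}=\mathscr I^{\mathrm{loc}}+\mathscr I^{\mathrm{glob}}$, with the $t$-integrals over $(0,1)$ and $(1,\infty)$ respectively. Running the truncation on $\mathscr I^{\mathrm{loc}}$ with $\tau\le1$ (balancing against the $\tau^{s-D_0/2}$ tail when $\lambda\gtrsim\|g\|_1$, and using the crude bound $\|\mathscr I^{\mathrm{loc}}g\|_1\lesssim\|g\|_1$ when $\lambda\lesssim\|g\|_1$) gives $\mathscr I^{\mathrm{loc}}g\in L^{\frac{D_0}{D_0-2s},\infty}$ with quasinorm $\lesssim\|g\|_1$; symmetrically, truncating $\mathscr I^{\mathrm{glob}}$ with $\tau\ge1$ against the $\tau^{s-D_\infty/2}$ tail gives $\mathscr I^{\mathrm{glob}}g\in L^{\frac{D_\infty}{D_\infty-2s},\infty}$ with quasinorm $\lesssim\|g\|_1$. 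Summing the two pieces delivers $\mathscr L^{2s,1}\hookrightarrow L^{\frac{D_0}{D_0-2s},\infty}+L^{\frac{D_\infty}{D_\infty-2s},\infty}$, which is what \eqref{weak12} records.

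I expect the main difficulty to be not any single estimate but the bookkeeping in part (b): matching the two scale regimes across the crossover $\tau\simeq1$ (equivalently $\lambda\simeq\|g\|_1$), choosing the local/global decomposition so that each summand genuinely lands in the correct Lorentz space with a constant depending only on $N,D_0,D_\infty,s,\gamma$, and then descending from the potential of an $L^1$ function back to $\mathscr L^{2s,1}$ through density. This last point is slightly delicate because the target is only a weak (hence non-reflexive) space, so the limiting step must be phrased via convergence in measure rather than in quasinorm; conveniently, the boundedness of $(-\A)^{-s}$ on $L^1$ that one proves along the way is precisely what makes the subordination identity transfer to the whole space.
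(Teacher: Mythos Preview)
Your argument is correct. The overall architecture matches the one the paper sketches after the statement: reduce to mapping properties of the Riesz-type potential $g\mapsto\Gamma(s)^{-1}\int_0^\infty t^{s-1}P_tg\,dt$ via the inversion formula, and let the volume growth pick out the dimensional exponent. The genuine difference is in the technical engine. The paper singles out the Poisson-based maximal operator $\Ma^\star$ (Theorem~\ref{T:maximal}) as ``the key technical tool'' for the mapping properties of the Riesz potentials, whereas you bypass it entirely with a direct Hedberg truncation: split the $t$-integral at a free level $\tau$, bound the head in $L^1$ by contractivity of $P_t$ and the tail in $L^\infty$ by the on-diagonal kernel decay $p(X,Y,t)\le C_N V(t)^{-1}$, then optimise in $\tau$. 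This is more elementary and fully adequate for the endpoint $p=1$ weak conclusion. What it forfeits is the strong $L^p$ theory: the maximal-function route in \cite{GThls} simultaneously yields the embeddings \eqref{semb} for $p>1$, which your truncation would not give without reintroducing a maximal operator or interpolation. For part (b), your local/global decomposition at $t=1$ is the natural way to handle the crossover between the $D_0$ and $D_\infty$ regimes, and the case split $\lambda\gtrless\|g\|_1$ you describe (Hedberg balancing on one side, crude $L^1$ or $L^\infty$ bound on the other) lands each piece in the correct Lorentz space with constants of the right dependence. The density passage at the end is also fine: once $(-\A)^{-s}:L^1\to L^{q,\infty}$ is bounded, convergence of $(-\A)^s f_k$ in $L^1$ forces convergence of $f_k$ in measure, which is enough to pass the weak-type inequality to the limit.
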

We refer the reader to \cite{GThls} for the case $p>1$, where we established the strong embeddings
\begin{equation}\label{semb}
\Lo\hookrightarrow\ L^{\frac{pD}{D-2s p}}\qquad\mbox{ and }\qquad \Lo\hookrightarrow L^{\frac{p D_0}{D_0-2s p}} + L^{\frac{p D_\infty}{D_\infty-2s p}}
\end{equation}
under the respective assumptions \eqref{vol} and \eqref{vol2} (with $p<\frac{D}{2s}$ and $p<\frac{D_\infty}{2s}$).\\
Let us spend some words on the proofs provided in \cite{GThls} of Theorem \ref{T:sob} and \eqref{semb}, which were inspired by the works by Varopoulos in the framework of positive symmetric semigroups (see, e.g., \cite{V85}). With the aid of a crucial inversion formula for the fractional powers of $\A$ in terms of suitable Riesz-type potentials having the following semigroup representation
$$
f\mapsto \frac{1}{\G(s)} \int_0^\infty t^{s-1} P_t f dt,
$$
we were able to deduce the proof of our Sobolev-type embeddings from the $L^p-L^q$ mapping properties of these Riesz potentials. The identification of such $q$ is the point in the proof where the volume growth conditions \eqref{vol} and \eqref{vol2} come into play. The key technical tool to show these mapping properties is the introduction of a maximal function related to $\A$, which we believe has interest in its own. Influenced by the powerful ideas by Stein in \cite{Steinlp}, we exploited the Poisson semigroup $e^{z\sqrt{-\A}}$ to define the maximal function
$$
\Ma^\star f(X) = \underset{z>0}{\sup}\ \left|\frac{1}{\sqrt{4\pi}} \int_0^\infty \frac{z}{t^{3/2}} e^{-\frac{z^2}{4t}} P_t f(X) dt\right|,\ \ \ \ X\in \RN.
$$
The operator $\Ma^\star$ maps in fact continuously $L^1$ in $L^{1,\infty}$ and any $L^p$ in itself for $p>1$ (see \cite[Theorem 5.5]{GThls}), i.e. the following maximal theorem for the class \eqref{A0} holds true.
\begin{theorem}\label{T:maximal}
Assume \eqref{trace}.
\begin{itemize}
\item[(i)] There exists a universal constant $A_1>0$ such that, given $f\in L^1$, one has
\[
\underset{\la>0}{\sup}\ \la |\{X\in \RN \mid \Ma^\star f(X) > \la\}| \le A_1 \|f\|_{1};
\]
\item[(ii)] If $1<p\le \infty$, there exists a universal constant $A_p>0$ such that for any $f\in L^p$ one has
\[
\|\Ma^\star f\|_{p} \le A_p \|f\|_{p}.
\]
\end{itemize}
\end{theorem}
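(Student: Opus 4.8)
The plan is to use subordination to reduce both statements to a single weak $(1,1)$ estimate for the heat semigroup, and then to interpolate. Since $\frac{1}{\sqrt{4\pi}}\int_0^\infty \frac{z}{t^{3/2}}e^{-\frac{z^2}{4t}}\,dt=1$ for every $z>0$, and since $|P_tf|\le P_t|f|$ with $P_t|f|\ge 0$ (recall that $P_t$ is positivity preserving), for each fixed $z$ the quantity inside the supremum defining $\Ma^\star f(X)$ is a convex average of the family $\{P_t|f|(X)\}_{t>0}$. Hence $\Ma^\star f(X)\le \mathcal M f(X):=\sup_{t>0}P_t|f|(X)$, and it suffices to prove (i) and (ii) with $\Ma^\star$ replaced by $\mathcal M$. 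The bound $\|\mathcal M f\|_\infty\le\|f\|_\infty$ is immediate from the Markov property $P_t1\equiv 1$ together with positivity, which already yields (ii) for $p=\infty$.

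First I would reduce the weak $(1,1)$ bound for $\mathcal M$ to a Hardy--Littlewood--type estimate. Using the explicit kernel \eqref{PtKt} one writes $P_t|f|(X)=\frac{\omega_N(4\pi)^{-N/2}}{V(t)}\int_{\RN}e^{-\frac{m_t(X,Y)^2}{4t}}|f(Y)|\,dY$ and splits the integration over the annular regions $B_t(X,2^{k+1}\sqrt t)\setminus B_t(X,2^k\sqrt t)$, $k\ge 0$. By \eqref{defs} the pseudoball $B_t(X,r)$ is the ellipsoid $e^{tB}X+r\,K(t)^{1/2}\mathbb B$, so one has the \emph{exact} volume scaling $|B_t(X,r)|=(r/\sqrt t)^N\,V(t)$; coupling this with the Gaussian decay $e^{-m_t(X,Y)^2/4t}\le e^{-4^k/4}$ for $Y$ outside $B_t(X,2^k\sqrt t)$ gives the pointwise domination
\[
\mathcal M f(X)\ \le\ C(N)\,\mathcal M_{\mathcal E}f(X),\qquad \mathcal M_{\mathcal E}f(X):=\sup\Bigl\{\tfrac{1}{|E|}\int_E|f(Y)|\,dY\ :\ E=B_t(X,r),\ t,r>0\Bigr\},
\]
the constant being controlled by $\sum_{k\ge 0}e^{-4^k/4}2^{(k+1)N}<\infty$. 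So the task becomes the weak type $(1,1)$ boundedness of $\mathcal M_{\mathcal E}$.

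For each \emph{fixed} $t>0$ the sets $E$ entering $\mathcal M_{\mathcal E}$ are precisely the balls of the norm $Y\mapsto|K(t)^{-1/2}Y|$ centered along $e^{tB}(\RN)$, and for this single-time maximal function a Vitali covering argument yields a weak $(1,1)$ bound with a purely dimensional constant --- the Lebesgue measure of these ellipsoids scaling like $r^N$ --- \emph{uniformly} in $t$; here one also uses $\operatorname{tr}B\ge 0$ to absorb the Jacobian $|\det e^{tB}|=e^{t\operatorname{tr}B}\ge 1$ produced by recentering at $e^{tB}X$. The delicate point, which I expect to be the main obstacle, is to pass from this estimate for a single time slice to the full supremum over $t>0$: the volume function $V(t)$ need not be doubling across time scales (in some of the examples above it grows exponentially as $t\to\infty$), so $\mathcal M_{\mathcal E}$ is \emph{not} the maximal operator of a space of homogeneous type and no single covering lemma suffices. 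This is where the semigroup law $P_{t+s}=P_tP_s$ --- equivalently, the Chapman--Kolmogorov identity for $p(X,Y,t)$ --- has to be exploited, in order to produce a sub-additive comparison among the pseudo-distances $m_t$ at different times against which the supremum can be organized; it is precisely for this reason that the Poisson semigroup $e^{z\sqrt{-\A}}$, rather than the heat semigroup, was built into the definition of $\Ma^\star$, following Stein's treatment of Poisson maximal functions.

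Finally, once the weak $(1,1)$ estimate (i) is in hand and (ii) is known at $p=\infty$, the strong $(p,p)$ bound for $1<p<\infty$ follows from the Marcinkiewicz interpolation theorem, since $\Ma^\star$ (hence $\mathcal M$) is sublinear.
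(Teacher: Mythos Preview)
The paper does not prove Theorem~\ref{T:maximal} here; it is quoted from \cite[Theorem 5.5]{GThls}, and the surrounding discussion explains that the argument there follows Stein's strategy in \cite{Steinlp}. That strategy is essentially semigroup-theoretic, not geometric: one first invokes the Hopf--Dunford--Schwartz maximal ergodic theorem, which for any semigroup of positive contractions on $L^1$ and $L^\infty$ (this is exactly where \eqref{trace} enters) yields the weak $(1,1)$ and strong $(p,p)$ bounds for the ergodic maximal function $f^{\sharp}(X)=\sup_{T>0}\frac{1}{T}\int_0^T P_t|f|(X)\,dt$. One then exploits the specific shape of the subordination kernel $\phi_z(t)=\frac{z}{\sqrt{4\pi}}t^{-3/2}e^{-z^2/4t}$: an integration by parts in $t$ gives, with $F(t)=\int_0^t P_s|f|(X)\,ds$,
\[
\int_0^\infty \phi_z(t)\,P_t|f|(X)\,dt\ =\ -\int_0^\infty t\,\phi_z'(t)\,\frac{F(t)}{t}\,dt\ \le\ f^{\sharp}(X)\int_0^\infty t\,|\phi_z'(t)|\,dt,
\]
and a scaling change of variables shows $\int_0^\infty t\,|\phi_z'(t)|\,dt$ is a finite constant independent of $z$. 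Thus $\Ma^\star f\le C\,f^{\sharp}$ pointwise, and both (i) and (ii) follow from the ergodic maximal theorem.

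Your proposal contains a genuine gap, and it originates in the very first step. By dominating $\Ma^\star f$ by $\mathcal M f=\sup_{t>0}P_t|f|$ you discard precisely the structure that makes the theorem accessible: the subordination kernel is what lets one trade the \emph{pointwise} supremum in $t$ for an \emph{averaged} supremum, to which Hopf--Dunford--Schwartz applies. After that reduction you are left trying to prove a weak $(1,1)$ bound for $\mathcal M$ by covering arguments over the pseudoballs $B_t(X,r)$, and you yourself identify the obstacle: the family $\{B_t(X,r)\}_{t,r}$ does not form a space of homogeneous type (the volume $V(t)$ can grow exponentially, there is no engulfing across different $t$'s), so no Vitali/Besicovitch lemma handles the supremum over $t$. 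Your closing remark that ``the Poisson semigroup was built into the definition of $\Ma^\star$ for this reason'' is correct, but you have already thrown that advantage away; the semigroup law $P_{t+s}=P_tP_s$ does not, by itself, repair the covering argument. In short, the route through $\mathcal M$ and geometric covering is not how the result is obtained, and as written the argument does not close.
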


\subsection{Nonlocal perimeters and coarea formulas}\label{sub2} As we want to go back to the study of the Besov spaces $\mathfrak B^\sA_{s,p}$ and their seminorm $\mathscr N^{\A}_{s,p}$ defined in \eqref{sn}, we start by recalling the relationship between the $s$-perimeter associated to $\sA$ and the fractional power $\As$. Keeping in mind Definition \ref{defper} and \eqref{defAs}, we have in fact
\begin{equation}\label{confr}
\mathfrak P_{\A, s}(E)=\frac{\G(1-s)}{s} \|\As \In\|_1\qquad \mbox{ if }\In \in \mathfrak B^{\A}_{2s,1}.
\end{equation}
We refer to \cite[Corollary 3.6]{GTfi} and \cite[Lemma 3.3]{GTjfa} for a proof of \eqref{confr}. Moreover, in \cite[Proposition 3.3]{GTfi} we showed the boundedness of the map $\As: \mathfrak B^{\A}_{2s,1}\longrightarrow L^1$. This says that 
\begin{equation}\label{conbound} 
\mathfrak B^{\A}_{2s,1}\hookrightarrow \mathscr L^{2s,1}.
\end{equation}
Like the Sobolev-type spaces $\Lo$, also the Besov spaces $\mathfrak B^\sA_{s,p}$ enjoy useful density properties. In the next section it will play a role the following density result which was established in \cite[Proposition 3.2]{BGT} (see also \cite[Lemma 7.3]{GTjfa}).
\begin{proposition}\label{dens}
Assume \eqref{trace}. For every $0<s<1$ and $1\leq p<\infty$, we have 
$$\overline{C^\infty_0}^{\mathfrak B^\sA_{s,p}}=\mathfrak B^\sA_{s,p}.$$
\end{proposition}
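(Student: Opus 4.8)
The plan is to argue in three steps: first that test functions belong to $\mathfrak B^\sA_{s,p}$, then that a general $f\in\mathfrak B^\sA_{s,p}$ can be approximated by bounded, compactly supported elements of $\mathfrak B^\sA_{s,p}$, and finally that such functions can be regularised by the semigroup $P_t$ itself. Only \eqref{trace} enters, together with the explicit kernel \eqref{PtKt} and the at most polynomial blow-up $V(t)^{-1}\lesssim t^{-D_0/2}$ as $t\to0^+$ from \eqref{D0}; the key structural facts used throughout are that $P_t$ (hence $P^*_t$) is an $L^p$-contraction for $1\le p\le\infty$ — because $P^*_t1=e^{-t\operatorname{tr} B}\le1$ — and that $\mathscr N^\sA_{s,p}$ is a seminorm.

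\emph{Step 1: $C^\infty_0(\RN)\subset\mathfrak B^\sA_{s,p}$.} First I would check that test functions have finite seminorm. Given $\vf\in C^\infty_0(\RN)$, split the integral in \eqref{sn} at $t=1$. On $(1,\infty)$ the bound $P_t(|\vf-\vf(X)|^p)(X)\le 2^{p-1}\bigl(P_t(|\vf|^p)(X)+|\vf(X)|^p\bigr)$ together with $\|P_t h\|_1\le\|h\|_1$ for $h\ge0$ gives a contribution $\lesssim\|\vf\|_p^p\int_1^\infty t^{-\frac{sp}{2}-1}\,dt<\infty$. On $(0,1)$, for $X$ in a fixed neighbourhood of $\operatorname{supp}\vf$ one uses the Lipschitz bound $|\vf(Y)-\vf(X)|\le\|\nabla\vf\|_\infty|X-Y|$ and the moment estimate $\int_{\RN}p(X,Y,t)|X-Y|^p\,dY\lesssim t^{p/2}(1+|X|^p)$, which follows at once from \eqref{PtKt} since $p(X,\cdot,t)$ is Gaussian with mean $e^{tB}X$ and covariance $2tK(t)$ and $|e^{tB}-I|\lesssim t$ for $t\le1$; for $X$ far from $\operatorname{supp}\vf$ one instead uses $p(X,Y,t)\lesssim V(t)^{-1}e^{-c|X|^2/t}$, which makes the contribution superintegrable near $t=0$. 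Since $\int_0^1 t^{\frac{p}{2}(1-s)-1}\,dt<\infty$, this yields $\mathscr N^\sA_{s,p}(\vf)<\infty$. The very same reasoning gives $P_\tau g\in\mathfrak B^\sA_{s,p}$ for any bounded, compactly supported $g$ and any $\tau>0$, since then $P_\tau g$ and all of its derivatives decay like Gaussians.

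\emph{Step 2: reduction to bounded, compactly supported data.} The truncations $T_kf:=(-k)\vee(f\wedge k)$ converge to $f$ in $\mathfrak B^\sA_{s,p}$: indeed $f-T_kf$ is a $1$-Lipschitz function of $f$ that vanishes at $0$ and tends pointwise to $0$, so dominated convergence applies in \eqref{sn} — the integrand being dominated by that of $\mathscr N^\sA_{s,p}(f)^p$ — and likewise in $\|\cdot\|_p$. Secondly, fixing $\chi\in C^\infty_0(\RN)$ with $\chi\equiv1$ near the origin and $0\le\chi\le1$, and setting $\chi_R=\chi(\cdot/R)$, $\psi_R=1-\chi_R$, one shows $\mathscr N^\sA_{s,p}(\psi_R g)\to0$ as $R\to\infty$ for every $g\in\mathfrak B^\sA_{s,p}$: from $\psi_R(Y)g(Y)-\psi_R(X)g(X)=\psi_R(X)\bigl(g(Y)-g(X)\bigr)+g(Y)\bigl(\psi_R(Y)-\psi_R(X)\bigr)$ and $(a+b)^p\le2^{p-1}(a^p+b^p)$, the first resulting term is $\psi_R(X)^p$ times the integrand of $\mathscr N^\sA_{s,p}(g)^p$ and vanishes by dominated convergence, while the second equals $\int_{\RN}|g(Y)|^p\,h_R(Y)\,dY$ with $h_R(Y)=\int_0^\infty t^{-\frac{sp}{2}-1}\int_{\RN}p(X,Y,t)|\psi_R(Y)-\psi_R(X)|^p\,dX\,dt$, where one verifies $h_R(Y)\to0$ pointwise and — the slightly delicate point — a uniform bound $h_R(Y)\le M$ for $R\ge1$: split at $t=1$ and use the crude bound $|\psi_R(Y)-\psi_R(X)|\le2$ for large $t$, the gradient bound $|\psi_R(Y)-\psi_R(X)|\le\frac{C}{R}|X-Y|$ together with the moment estimate for $\int_{\RN}p(X,\cdot,t)$ in the adjoint picture when $|Y|\lesssim R$, and a Gaussian tail bound when $|Y|\gg R$. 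Applying this to $g=f$ and then to $g=T_kf$, and using that $\mathscr N^\sA_{s,p}$ is a seminorm, we may from now on assume $f$ bounded with compact support.

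\emph{Step 3: semigroup mollification — the main obstacle.} For such $f$ one has $\chi_{R'}P_\tau f\in C^\infty_0(\RN)$, and since $P_\tau f\in\mathfrak B^\sA_{s,p}$ decays like a Gaussian, $\mathscr N^\sA_{s,p}(\chi_{R'}P_\tau f-P_\tau f)\to0$ as $R'\to\infty$ by Step 2; so the whole statement reduces to $\mathscr N^\sA_{s,p}(P_\tau f-f)\to0$ as $\tau\to0^+$, $\chi_{R'}P_\tau f$ being then the desired approximating sequence in $C^\infty_0(\RN)$. I expect this last convergence to be the hard part. The natural route is the coupling $P_\tau f(X)=\mathbb E_G\bigl[f(e^{\tau B}X+G)\bigr]$ with $G\sim\mathcal N\bigl(0,2\tau K(\tau)\bigr)$ independent of $X$: Jensen's inequality and Fubini's theorem then give $\mathscr N^\sA_{s,p}(P_\tau f-f)^p\le\int\mathscr N^\sA_{s,p}\bigl(f\circ A_{\tau,G}-f\bigr)^p\,d\mu_\tau(G)$ with $A_{\tau,G}(X)=e^{\tau B}X+G$, reducing matters to the continuity statement $\mathscr N^\sA_{s,p}(f\circ A-f)\to0$ as the affine map $A$ tends to the identity. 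Since $\mathscr N^\sA_{s,p}$ is \emph{not} translation invariant — the drift $\langle BX,\nabla\rangle$ destroys this, which is also why a plain Euclidean mollification would not obviously work — this continuity is not automatic: splitting the $t$-integral at a small $\delta$, the range $t\ge\delta$ is handled by the $L^p$-continuity of $A\mapsto f\circ A$, but on $(0,\delta)$ one must compare the perturbed kernel $p(A^{-1}X,A^{-1}Y,t)$ with $p(X,Y,t)$ quantitatively — replacing $K(t)$ by $e^{-\tau B}K(t)e^{-\tau B^\star}$ and adjoining a drift of size $\lesssim|G|+\tau|X|$ only perturbs the quadratic form $m_t(X,Y)^2/4t$ in a controlled way, uniformly down to $t=0$ by $V(t)^{-1}\lesssim t^{-D_0/2}$ — and this must be done keeping the cancellation between $f\circ A$ and $f$ rather than estimating the two terms separately. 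The compatibility of the semigroup mollifier with $\mathscr N^\sA_{s,p}$ through the Chapman--Kolmogorov identity $P_tP_\tau=P_{t+\tau}$ is what makes this route tractable.
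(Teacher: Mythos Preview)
The paper does not supply a proof of Proposition~\ref{dens}: the result is imported wholesale from \cite[Proposition 3.2]{BGT} (see also \cite[Lemma 7.3]{GTjfa}). The only internal hint about that argument appears in the proof of Theorem~\ref{T:alt2}, where the convergence of \emph{Friedrichs' mollifiers} in $\mathfrak B^{\A}_{2s,1}$ is attributed to ``\cite[Proposition 3.2, Step I]{BGT}'', indicating that the cited proof proceeds by ordinary Euclidean convolution --- the very route you set aside as ``would not obviously work'' --- rather than by semigroup regularisation.

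On your sketch itself: Steps~1 and~2 are standard and the outlines are sound. Step~3, however, is a diagnosis rather than a proof, and the one substantive assertion it contains is incorrect. You claim that after the affine change of variables the replacement of $K(t)$ by $e^{-\tau B}K(t)e^{-\tau B^\star}$, together with the drift shift, ``only perturbs the quadratic form $m_t(X,Y)^2/4t$ in a controlled way, uniformly down to $t=0$''. It does not: for the Kolmogorov model \eqref{kolmo} and the unit vector $\xi=(1,0)$ in the velocity block one computes directly
\[
\bigl\langle e^{\mp\tau B^\star}K(t)^{-1}e^{\mp\tau B}\,\xi,\xi\bigr\rangle\;=\;4\pm\frac{12\tau}{t}+\frac{12\tau^2}{t^2}
\qquad\text{while}\qquad
\bigl\langle K(t)^{-1}\xi,\xi\bigr\rangle\;=\;4,
\]
so the perturbed and original quadratic forms are \emph{not} uniformly comparable as $t\to0^+$. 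The translation part suffers the same defect: $t^{-1}\langle K(t)^{-1}(I-e^{tB})G,(I-e^{tB})G\rangle$ is of order $1/t$ for generic $G$, whence the pointwise ratio $p(X+G,Y+G,t)/p(X,Y,t)$ is unbounded near $t=0$. A direct appeal to Chapman--Kolmogorov does not salvage the small-$t$ range either; the natural splitting produces a divergent factor $\int_0^\delta t^{-sp/2-1}\,dt$ against a $t$-independent remainder. Whatever device completes the argument --- and the cited reference shows that Euclidean mollification can be made to work --- must exploit cancellation more carefully than your sketch indicates; as written, Step~3 is a genuine gap.
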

Another important tool for our puroposes is the validity of a coarea formula which yields a further link between the seminorm $\mathscr N^{\A}_{2s,1}$ and the nonlocal perimeter $\mathfrak P_{\A, s}$.
\begin{proposition}\label{coar}
Let $0<s<\frac{1}{2}$, and assume \eqref{trace}. For any $f\in \mathfrak B^{\A}_{2s,1}$ we have
\begin{equation}\label{chiaveinstrong}
\mathscr N^{\A}_{2s,1}(f)= \int_\R \mathfrak P_{\A, s}\left(\{f>\sigma\}\right) d\sigma.
\end{equation}
\end{proposition}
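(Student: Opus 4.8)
The plan is to establish the coarea formula \eqref{chiaveinstrong} by reducing it to the fact that the $s$-perimeter $\mathfrak P_{\A,s}$ admits the representation \eqref{weighthejump} in terms of $\|P_t\In-\In\|_1$, and then to exploit the pointwise layer-cake decomposition $f=\int_{\R}\left(\In_{\{f>\sigma\}}-\In_{\{\sigma<0\}}\right)d\sigma$. First I would fix $f\in\mathfrak B^{\A}_{2s,1}$ and observe that, by the same reasoning leading to \eqref{weighthejump}, one has
$$
\mathscr N^{\A}_{2s,1}(f)=\int_0^\infty \frac{1}{t^{1+s}}\int_{\RN}\left(P_t(|f-f(X)|)(X)\right)dX\,dt,
$$
so the entire identity reduces to proving, for a.e.\ $t>0$, the coarea identity at the level of the integrand
$$
\int_{\RN} P_t\left(|f-f(X)|\right)(X)\,dX=\int_{\R}\left(\int_{\RN}\left|P_t\In_{\{f>\sigma\}}-\In_{\{f>\sigma\}}\right|(X)\,dX\right)d\sigma,
$$
after which one integrates in $t$ against $t^{-(1+s)}$ and applies Tonelli to exchange the $\sigma$- and $t$-integrations on the right, recognizing the inner double integral as $\mathscr N^{\A}_{2s,1}(\In_{\{f>\sigma\}})=\mathfrak P_{\A,s}(\{f>\sigma\})$.

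The heart of the matter is the displayed $t$-fixed identity, and here the plan is to use Fubini on $\RN\times\RN$ together with the elementary real-variable coarea fact that, for any two real numbers $a,b$, $|a-b|=\int_{\R}\left|\In_{\{a>\sigma\}}-\In_{\{b>\sigma\}}\right|d\sigma$. Writing $P_t\left(|f-f(X)|\right)(X)=\int_{\RN}p(X,Y,t)\,|f(Y)-f(X)|\,dY$ and inserting this identity with $a=f(Y)$, $b=f(X)$, I would obtain
$$
\int_{\RN}\int_{\RN}p(X,Y,t)\left(\int_{\R}\left|\In_{\{f>\sigma\}}(Y)-\In_{\{f>\sigma\}}(X)\right|d\sigma\right)dY\,dX,
$$
and then exchange the $\sigma$-integral with the $X,Y$-integrals. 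The inner quantity for fixed $\sigma$ is $\int_{\RN}P_t\left(|\In_{\{f>\sigma\}}-\In_{\{f>\sigma\}}(X)|\right)(X)\,dX$, which by the nonnegativity argument behind \eqref{weighthejump} (the set $\{f>\sigma\}$ having an indicator function, so that $P_t\In-\In$ controls $P_t(|\In-\In(X)|)(X)$ up to the same $L^1$ value) equals $\|P_t\In_{\{f>\sigma\}}-\In_{\{f>\sigma\}}\|_1$. Here one should be slightly careful: the pointwise identity $P_t(|\In_E-\In_E(X)|)(X)=|P_t\In_E(X)-\In_E(X)|$ fails pointwise, but their $X$-integrals agree, which is exactly the content recalled after \eqref{weighthejump}; I would invoke \cite[Corollary 3.6]{GTfi} and \cite[Section 3]{GTjfa} for this.

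The main obstacle, and the only point requiring genuine care, is the justification of the interchanges of integration: one must verify that the triple integral in $(\sigma,X,Y)$ of $p(X,Y,t)\left|\In_{\{f>\sigma\}}(Y)-\In_{\{f>\sigma\}}(X)\right|$, and subsequently the quadruple integral also involving $t$ against $t^{-(1+s)}$, are absolutely convergent. For fixed $t$ this follows from $f\in L^1$ and the stochastic completeness $P_t1\equiv1$, since the $\sigma$-integrated integrand is $p(X,Y,t)|f(Y)-f(X)|$ and $\int\int p(X,Y,t)(|f(Y)|+|f(X)|)dYdX\leq \|f\|_1+e^{-t\tr}\|f\|_1<\infty$. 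For the final exchange in $t$ one uses precisely the finiteness of $\mathscr N^{\A}_{2s,1}(f)$, which is the standing hypothesis $f\in\mathfrak B^{\A}_{2s,1}$, so that the nonnegative integrand over $(t,\sigma)\in(0,\infty)\times\R$ has finite total integral and Tonelli applies unconditionally. Once these measure-theoretic points are settled, the identity \eqref{chiaveinstrong} follows by assembling the pieces; I would also note in passing that the finiteness of the right-hand side forces $\In_{\{f>\sigma\}}\in\mathfrak B^{\A}_{2s,1}$ for a.e.\ $\sigma$, so each term $\mathfrak P_{\A,s}(\{f>\sigma\})$ in \eqref{chiaveinstrong} is well-defined.
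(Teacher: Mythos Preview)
Your argument is correct. The paper's own proof is a one-line citation of \cite[Proposition 7.4 and formula (7.4)]{GTjfa} together with \eqref{confr} and \eqref{weighthejump}; the layer-cake computation you spell out is essentially the content being deferred to that earlier reference by the same authors, so in substance the approaches coincide.

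Two minor corrections. First, the pointwise identity $P_t(|\In_E-\In_E(X)|)(X)=|P_t\In_E(X)-\In_E(X)|$ actually \emph{does} hold for indicators: for $X\in E$ both sides equal $1-P_t\In_E(X)$, and for $X\notin E$ both equal $P_t\In_E(X)$ (using $0\le P_t\In_E\le P_t 1=1$). Your caution there is unnecessary, and the opening display for $\mathscr N^{\A}_{2s,1}(f)$ is simply the definition \eqref{sn} with $p=1$, not something requiring the reasoning behind \eqref{weighthejump}. Second, your closing remark that finiteness of the right-hand side forces $\In_{\{f>\sigma\}}\in\mathfrak B^{\A}_{2s,1}$ for a.e.\ $\sigma$ is not quite right for $\sigma<0$, where $|\{f>\sigma\}|$ may be infinite (e.g.\ if $f\ge0$); what your computation actually yields is that the seminorm $\mathscr N^{\A}_{2s,1}(\In_{\{f>\sigma\}})$ is finite for a.e.\ $\sigma$, and \eqref{chiaveinstrong} should be read with $\mathfrak P_{\A,s}(\{f>\sigma\})$ interpreted as that seminorm whether or not $\In_{\{f>\sigma\}}\in L^1$. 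Neither point affects the validity of the proof.
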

\begin{proof}
It is a consequence of the results in \cite[Proposition 7.4 and formula (7.4)]{GTjfa}, once we keep in mind \eqref{confr} and \eqref{weighthejump}.
\end{proof}

\begin{remark} If one compares Definition \ref{defper} with the definition of $s$-perimeter associated to $\sA$ given in \cite[Section 4.1]{GTjfa}, one can notice a difference that we now want to comment on. Via a relaxation procedure, in \cite{GTjfa} we denoted by $\mathfrak P^{\A}_{s}(E)$ the following
$$
{\inf}\left\{ \underset{k\to \infty}{\liminf}\ \|\As f_k\|_1 \,\mid\, \{f_k\}_{k\in \mathbb N}\in\So \mbox{ such that }f_k \to \In\mbox{ in }L^1  \right\}
$$
for $E\subset \RN$ measurable and with finite volume. Thanks to Proposition \ref{dens} it is clear that, if $\In \in \mathfrak B^{\A}_{2s,1}$, then there exists $\{f_k\}_{k\in \mathbb N}\in C_0^\infty \subset \So$ converging to $\In$ in $\mathfrak B^{\A}_{2s,1}$. For such a sequence we then obtain
$$
\underset{k\to \infty}{\lim}\ \|\As f_k\|_1 = \|\As \In\|_1= \frac{s}{\G(1-s)}  \mathfrak P_{\A, s}(E),
$$
where we exploited \eqref{conbound} and \eqref{confr}. Hence we have
$$
\mathfrak P^{\A}_{s}(E)\leq \frac{s}{\G(1-s)}  \mathfrak P_{\A, s}(E)\quad\mbox{ in case }\In \in \mathfrak B^{\A}_{2s,1}.
$$
\end{remark}

\begin{remark} In \cite[Section 4.2]{GTjfa} we introduced another notion of $s$-perimeter which we denoted by $\mathfrak P^{\A, \star}_{s}(E)$. For bounded measurable sets $E\subset \RN$, we let
$$
\mathfrak P^{\A, \star}_{s}(E)=\underset{t\to 0^+}{\lim}\ \|\As P_t \In\|_1.
$$
The previous definition makes sense as $P_t \In$ belongs to $\So$ for any $t>0$ and $t\mapsto \|\As P_t \In\|_1$ is monotone decreasing (see \cite[pg. 21]{GTjfa}). Keeping in mind \eqref{conbound}, we have by \cite[Corollary 3.5]{GTjfa} and \eqref{confr} that
$$
\mathfrak P^{\A, \star}_{s}(E)= \|\As \In\|_1 = \frac{s}{\G(1-s)}  \mathfrak P_{\A, s}(E)\quad\mbox{ in case }\In \in \mathfrak B^{\A}_{2s,1}.
$$
\end{remark}

From \eqref{conbound} and Theorem \ref{T:sob}, we immediately deduce the validity of the embeddings
\begin{equation}\label{weak}
\mathfrak B^{\A}_{2s,1}  \hookrightarrow L^{\frac{D}{D-2s},\infty} \qquad\mbox{and}\qquad \mathfrak B^{\A}_{2s,1}\hookrightarrow L^{\frac{D_0}{D_0-2s},\infty} + L^{\frac{D_\infty}{D_\infty-2s},\infty}
\end{equation}
under the respective volume growth conditions \eqref{vol} and \eqref{vol2}. The objective of the next section is to replace $L^{q,\infty}$ with the strong spaces $L^{q}$ in \eqref{weak}.

\section{Proofs}\label{S:proof}

The proof of Theorem \ref{T:alt1} and Theorem \ref{T:alt2} will result as a combination of Theorem \ref{T:sob} with the following lemma.

\begin{lemma}\label{Lemma:interiorblowup}
Consider a measurable set $E\subset \RN$ with $|E|<\infty$. Then we have
$$
\underset{t\to 0^+}{\liminf} \left| \left\{ X\in\RN\mid P_t\In (X)>\frac{1}{2}\right\}\right|
\geq \left| {\rm{int}} E \right|
$$
\end{lemma}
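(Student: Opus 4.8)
The plan is to use that $\{P_t\}_{t>0}$ is a positivity-preserving approximation of the identity: near the interior of $E$ the function $P_t\In$ stays close to $1$, and this pointwise information can be upgraded to a lower bound on the measures of the superlevel sets $\{P_t\In>\tfrac12\}$ by Fatou's lemma.

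First I would fix $X\in{\rm int}\,E$ and pick $r>0$ with $B(X,r)\subset E$, together with a continuous bump function $\vf$ supported in $B(X,r)$ with $0\le \vf\le \In$ and $\vf(X)=1$ (e.g.\ $\vf(Y)=\max\{0,\,1-2|Y-X|/r\}$). Since $P_t$ is given by integration against the positive kernel $p(X,\cdot,t)$ in \eqref{PtKt}, it is order-preserving, so $P_t\In(X)\ge P_t\vf(X)$ for all $t>0$; and since $\vf\in L^\infty\cap C(\RN)$, property \eqref{delta} gives $P_t\vf(X)\to\vf(X)=1$ as $t\to0^+$. Together with $P_t\In\le P_t1\equiv 1$ this yields $\lim_{t\to0^+}P_t\In(X)=1$ for every $X\in{\rm int}\,E$; in particular there is $t_X>0$ with $P_t\In(X)>\tfrac12$ for all $0<t<t_X$.

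Next, set $A_t=\{X\in\RN\mid P_t\In(X)>\tfrac12\}$, which is open (hence measurable) because $X\mapsto P_t\In(X)$ is continuous for $t>0$. Let $t_k\to0^+$ be an arbitrary sequence. By the previous step, for each $X\in{\rm int}\,E$ we have $X\in A_{t_k}$ once $t_k<t_X$, so $\liminf_{k\to\infty}\mathbf 1_{A_{t_k}}(X)=1$, and therefore $\liminf_{k\to\infty}\mathbf 1_{A_{t_k}}\ge \mathbf 1_{{\rm int}\,E}$ pointwise on $\RN$. Fatou's lemma then gives
\[
\liminf_{k\to\infty}|A_{t_k}|
=\liminf_{k\to\infty}\int_{\RN}\mathbf 1_{A_{t_k}}\,dX
\ \ge\ \int_{\RN}\liminf_{k\to\infty}\mathbf 1_{A_{t_k}}\,dX
\ \ge\ |{\rm int}\,E|.
\]
As the sequence $t_k\to0^+$ was arbitrary, this forces $\liminf_{t\to0^+}|A_t|\ge|{\rm int}\,E|$, which is the claim.

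The argument is elementary once \eqref{delta} is at hand; the only genuinely delicate point is that $\In$ is not continuous, so \eqref{delta} cannot be applied to it directly — this is precisely why only the topological interior of $E$ (and not $E$ or $\overline E$) appears, and the comparison $\vf\le\In$ with a continuous bump is what circumvents it. The passage from the continuous parameter $t\to0^+$ to sequences is routine: if the conclusion failed one could extract $t_k\to0^+$ with $|A_{t_k}|\le|{\rm int}\,E|-\e$ for some $\e>0$, contradicting the bound just proved.
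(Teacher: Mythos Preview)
Your proof is correct and follows essentially the same approach as the paper: both construct a continuous bump function below $\In$ with value $1$ at the chosen interior point, use \eqref{delta} and positivity of $P_t$ to get $P_t\In(X)\to 1$ on ${\rm int}\,E$, and then apply Fatou's lemma to the indicators of the superlevel sets. The only cosmetic differences are that you make the bump explicit, note the openness of $A_t$, and pass through sequences before returning to the continuous parameter.
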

\begin{proof}
We start by noticing that
\begin{equation}\label{blowupclaim}
\mbox{ for every }X\in {\rm{int}} E \mbox{ we have }\underset{t\to 0^+}{\lim} P_t\In (X) =1.
\end{equation}
The statement in \eqref{blowupclaim} is a consequence of \eqref{delta}. As a matter of fact, for any $x \in {\rm{int}} E$, we can pick $\rho_X>0$ such that an open neighborhood of size $\rho_X$ is contained in $E$. Thanks to this fact, it is easy to construct a continuous function $\varphi_X$ such that 
$$0\leq \varphi_X\leq 1, \quad \varphi_X(X)=1,\quad \varphi_X\equiv 0 \mbox{ in }\RN\smallsetminus E.$$
Hence we have
$$
1\geq P_t\In (X) \geq P_t \varphi_X(X) \underset{t\to 0^+}{\longrightarrow} \varphi_X(X)=1,
$$
which ensures the validity of \eqref{blowupclaim}. Therefore
$$
\mbox{ for every }X\in {\rm{int}} E \,\,\mbox{ there exists }t_X>0\,\,\mbox{ such that } P_t\In (X)>\frac{1}{2} \,\,\mbox{ for }0<t<t_X.
$$
This fact implies that
$$
\underset{t\to 0^+}{\lim} \mathbf 1_{E_t}(X) =1 \quad\mbox{ for every }x\in {\rm{int}} E
$$
once we denote
$$
E_t=\left\{ X\in\RN\mid P_t\In (X)>\frac{1}{2}\right\}.
$$
By Fatou's Lemma we then obtain
$$
\underset{t\to 0^+}{\liminf}\int_{\RN} \mathbf 1_{E_t}(X)  dX \geq \int_{\RN} \left(\underset{t\to 0^+}{\liminf} \mathbf 1_{E_t}(X) \right)  dX \geq \int_{{\rm{int}} E} 1 dX,
$$
which completes the proof of the desired statement.
\end{proof}

We are thus ready to provide the proof of Theorem \ref{T:alt1}.

\begin{proof}[Proof of Theorem \ref{T:alt1}]
For any $f\in \mathfrak B^{\A}_{2s,1}$, we denote
$$E_\sigma=\{X\in\RN\mid |f(X)|>\sigma\},\qquad\mbox{for }\sigma>0.$$
Since by Proposition \ref{coar} we have
$$\int_{0}^{\infty} \mathfrak P_{\A, s}\left(E_\sigma\right) d\sigma = \mathscr N^{\A}_{2s,1}(|f|)\leq \mathscr N^{\A}_{2s,1}(f)<\infty,$$
it is clear that for almost any $\sigma$ one has $1_{E_\sigma}\in \mathfrak B^{\A}_{2s,1}$. As a consequence, for such $\sigma$, we obtain from \cite[Corollary 3.5]{GTjfa} and \eqref{confr} that
\begin{equation}\label{onelimit}
\underset{t\to 0^+}{\lim}\|\As P_t 1_{E_\sigma}\|_1=\|\As 1_{E_\sigma}\|_1=\frac{s}{\G(1-s)} \mathfrak P_{\A, s}(E_\sigma).
\end{equation}
The aim of the proof is to establish the following bound
\begin{equation}\label{smooth}
||f||_{\frac{D}{D-2s}} \leq \frac{2s S_{1,s}}{\G(1-s)} \mathscr N^{\A}_{2s,1}(f) \quad\mbox{ for every }f\in C^\infty\cap \mathfrak B^{\A}_{2s,1},
\end{equation}
where $S_{1,s}$ is the positive constant appearing in \eqref{weak11}. Thus, let us fix $f\in C^\infty\cap \mathfrak B^{\A}_{2s,1}$ and denote $G(\sigma)=|E_\sigma|$. Since $E_\sigma$ has finite measure (by Chebyshev's inequality) and it is open (by the continuity of $f$), we are entitled to apply Lemma \ref{Lemma:interiorblowup} and we obtain
\begin{equation}\label{fromlemma}
G(\sigma)\leq \underset{t\to 0^+}{\liminf} \left| \left\{ X\in\RN\mid P_t\mathbf 1_{E_\sigma} (X)>\frac{1}{2}\right\}\right|.
\end{equation}
On the other hand, since for $t>0$ the positive function $P_t 1_{E_\sigma}$ belongs to $\mathscr L^{2s,1}$ (this holds true for any $\sigma$ such that $1_{E_\sigma}\in \mathfrak B^{\A}_{2s,1}$, see in this respect \cite[Lemma 3.4]{GTjfa}), Theorem \ref{T:sob} yields
\begin{equation}\label{conseqsob}
\frac{1}{2}\left| \left\{ X\in\RN\mid P_t\mathbf 1_{E_\sigma} (X)>\frac{1}{2}\right\}\right|^{\frac{D-2s}{D}} \le S_{1,s} ||\As P_t\mathbf 1_{E_\sigma}||_{1}.
\end{equation}
Therefore, from the combination of \eqref{fromlemma}, \eqref{conseqsob}, and \eqref{onelimit}, we obtain
$$
G(\sigma)^{\frac{D-2s}D} \leq \frac{2s S_{1,s}}{\G(1-s)}\mathfrak P_{\A, s}(E_\sigma)
$$
Since $G$ is non-increasing and $D\geq D_0\ge 2 >2s$, we then have
\begin{align*}
||f||_{\frac{D}{D-2s}}&=\left(\int_{\RN} |f|^\frac{D}{D-2s}(X) dX\right)^\frac{D-2s}{D} = \left(\frac{D}{D-2s} \int_0^\infty \sigma^{\frac{2s}{D-2s}} G(\sigma) d\sigma\right)^\frac{D-2s}{D}\\
&\leq \int_0^\infty G(\sigma)^{\frac{D-2s}D} d\sigma\leq \frac{2s S_{1,s}}{\G(1-s)}\int_{0}^{\infty} \mathfrak P_{\A, s}\left(E_\sigma\right) d\sigma =\frac{2s S_{1,s}}{\G(1-s)} \mathscr N^{\A}_{2s,1}(|f|)\\
&\leq \frac{2s S_{1,s}}{\G(1-s)} \mathscr N^{\A}_{2s,1}(f).
\end{align*}
The previous inequality proves the desired \eqref{smooth}. Hence, the density of $C^\infty$ in $\mathfrak B^{\A}_{2s,1}$ (which is a consequence of Proposition \ref{dens}) implies the validity of \eqref{besunoembe} with the choice
$$
c=\frac{\G(1-s)}{2s S_{1,s}}.
$$
In particular, if $E\subset \RN$ has finite $s$-perimeter, we can plug $f=\In$ in \eqref{besunoembe} and we deduce
$$
\mathfrak P_{\A, s}(E)=\mathscr N^{\A}_{2s,1}(\In)\geq \frac{\G(1-s)}{2s S_{1,s}}\|\In\|_{\frac{D}{D-2s}}= \frac{\G(1-s)}{2s S_{1,s}}|E|^{\frac{D-2s}{D}},
$$
which completes the proof of the theorem.
\end{proof}

We conclude the paper with the proof of Theorem \ref{T:alt2}. To this aim we fix some notation. We set
$$q_0=\frac{D_0}{D_0-2s}\quad\mbox{ and }\quad q_\infty=\frac{D_\infty}{D_\infty-2s}.$$
We recall that, in the assumptions of the theorem, we have $D_0>D_\infty\geq 2>2s$. This says in particular that
\begin{equation}\label{order}
q_\infty>q_0>1.
\end{equation}
Let us also recall that, when we write $L^{q_0}+L^{q_\infty}$, we mean the Banach space of functions $f$ which can be written as $f=f_1+f_2$ with $f_1\in L^{q_0}$ and $f_2\in L^{q_\infty}$, endowed with the norm
$$||f||_{L^{q_0}+L^{q_\infty}} = \inf_{f=f_1+f_2,\\ f_1\in L^{q_0},\,f_2\in L^{q_\infty}}{||f_1||_{q_0}+||f_2||_{q_\infty}}.$$

\begin{proof}[Proof of Theorem \ref{T:alt2}]
We want to argue as similar as possible to the proof of Theorem \ref{T:alt1}, from which we also borrow the notations for the sets $E_\sigma$ and the non-increasing function $G(\sigma)$. We then fix an arbitrary function $f\in C^\infty\cap \mathfrak B^{\A}_{2s,1}$. From the combination of Lemma \ref{Lemma:interiorblowup}, Theorem \ref{T:sob}, and \eqref{onelimit}, we obtain
$$
\min\left\{G(\sigma)^{\frac{1}{q_0}}, G(\sigma)^{\frac{1}{q_\infty}} \right\}\leq \frac{2s S_{1,s}}{\G(1-s)}\mathfrak P_{\A, s}(E_\sigma),
$$
where $S_{1,s}$ is the positive constant appearing in \eqref{weak12}. If we then exploit the coarea formula \eqref{chiaveinstrong}, we deduce that
\begin{equation}\label{quasimb}
\int_0^\infty \min\left\{G(\sigma)^{\frac{1}{q_0}}, G(\sigma)^{\frac{1}{q_\infty}} \right\} d\sigma \leq \frac{2s S_{1,s}}{\G(1-s)} \mathscr N^{\A}_{2s,1}(|f|)\leq \frac{2s S_{1,s}}{\G(1-s)} \mathscr N^{\A}_{2s,1}(f).
\end{equation}
We now want to provide a lower bound for the left-hand side of \eqref{quasimb} in terms of $||f||_{L^{q_0}+L^{q_\infty}}$. To this aim, we denote $$\sigma_f=\sup\{\sigma> 0\,:\, G(\sigma)> 1\}.$$ If $|E_\sigma|\leq 1$ for all $\sigma$, we agree to let $\sigma_f=0$. Since $f\in L^1$ we have that $\sigma_f\in [0,\infty)$. We consider
\begin{equation}\label{funodue}
f_1(X)=f(X)\mathbf 1_{E_{\sigma_f}}(X)\quad\mbox{and}\quad f_2(X)=f(X)(1-\mathbf 1_{E_{\sigma_f}}(X)).
\end{equation}
We notice that $f_1(X)$ and $f_2(X)$ cannot be both non-null for the same $X$, and in particular the following holds true
$$
\left|f(X)\right|=\left|f_1(X) + f_2(X)\right|=\left|f_1(X)\right| + \left|f_2(X)\right|.
$$
We also make use of the notation $E^i_\sigma=\{X\in\RN\,:\,\left|f_i(X)\right|>\sigma\}$ for $i\in \{1,2\}$. One can check the following relations (see also \cite[pg. 38]{GTjfa})
$$E^1_\sigma=\begin{cases}
E_\sigma \qquad\,\,\mbox{if }\sigma>\sigma_f,  \\
E_{\sigma_f} \qquad \mbox{if }\sigma\leq\sigma_f
\end{cases}\quad\mbox{ and }\quad E^2_\sigma=\begin{cases}
\varnothing \,\,\,\quad\quad\quad\qquad\mbox{if }\sigma>\sigma_f,  \\
E_\sigma\smallsetminus E_{\sigma_f} \qquad \mbox{if }\sigma\leq\sigma_f.
\end{cases}$$
Since $|E^1_\sigma|\leq 1$ and $|E^1_\sigma|\leq G(\sigma)$ for all $\sigma$, by \eqref{order} we obtain
\begin{align}\label{funo}
||f_1||_{q_0}&=\left(q_0 \int_0^\infty \sigma^{q_0-1} |E^1_\sigma| d\sigma\right)^\frac{1}{q_0}\leq \int_0^\infty |E^1_\sigma|^{\frac{1}{q_0}} d\sigma = \int_0^\infty \min\left\{|E^1_\sigma|^{\frac{1}{q_0}},|E^1_\sigma|^{\frac{1}{q_\infty}}\right\} d\sigma\\
&\leq \int_0^\infty \min\left\{G(\sigma)^{\frac{1}{q_0}}, G(\sigma)^{\frac{1}{q_\infty}} \right\} d\sigma.\nonumber
\end{align}
On the other hand, since $G(\sigma)$ is bigger than $1$ on the interval $(0,\sigma_f)$, by \eqref{order} we also have
\begin{align}\label{fdue}
||f_2||_{q_\infty}&= \left(q_\infty \int_0^\infty \sigma^{q_\infty-1} |E^2_\sigma| d\sigma\right)^\frac{1}{q_\infty}\leq \int_0^\infty |E^2_\sigma|^{\frac{1}{q_\infty}} d\sigma= \int_0^{\sigma_f} |E^2_\sigma|^{\frac{D_\infty-2s}{D_\infty}} d\sigma\\
&\leq \int_0^{\sigma_f} G(\sigma)^{\frac{1}{q_\infty}} d\sigma=\int_0^{\sigma_f} \min\left\{G(\sigma)^{\frac{1}{q_0}}, G(\sigma)^{\frac{1}{q_\infty}} \right\} d\sigma.\nonumber
\end{align}
The combination of \eqref{funo} and \eqref{fdue} yields
$$||f_1||_{q_0}+||f_2||_{q_\infty}\leq 2\int_0^\infty \min\left\{G(\sigma)^{\frac{1}{q_0}}, G(\sigma)^{\frac{1}{q_\infty}} \right\} d\sigma,$$
Keeping in mind \eqref{quasimb}, we have just proved that
\begin{equation}\label{eccolaa}
||f||_{L^{q_0}+L^{q_\infty}}\leq  ||f_1||_{q_0}+||f_2||_{q_\infty}\leq \frac{4s S_{1,s}}{\G(1-s)} \mathscr N^{\A}_{2s,1}(f)\quad\mbox{ for every }f\in C^\infty\cap \mathfrak B^{\A}_{2s,1}.
\end{equation}
The density of $C^\infty$ in $\mathfrak B^{\A}_{2s,1}$ implies then the validity of \eqref{besdueembe} with the choice
$$
c=\frac{\G(1-s)}{4s S_{1,s}}.
$$
Finally, since it is not completely obvious to deduce from \eqref{besdueembe} the nonlocal isoperimetric inequality stated in Theorem \ref{T:alt2}, we provide here the details. Let us thus take a set $E\subset \RN$ with finite $s$-perimeter. We can consider a sequence $\{f_k\}_{k\in\N}$ of Friedrichs' mollifiers for the function $\In$. We recall that
$$
f_k\in  C^\infty\cap \mathfrak B^{\A}_{2s,1}, 	\qquad\qquad 0\leq f_k\leq 1. 
$$
If we let $(f_k)_1$ and $(f_k)_2$ the splitting of the function $f_k$ according to \eqref{funodue}, since $0\leq (f_k)_1,(f_k)_2\leq 1$ we have
\begin{align}\label{weirdconstant}
&\min\left\{\left(\int_{\RN} |(f_k)_1(X)|^{q_0} dX + \int_{\RN} |(f_k)_2(X)|^{q_\infty} dX\right)^{\frac{1}{q_0}},\right.\\
&\hspace{1cm}\left. \left(\int_{\RN} |(f_k)_1(X)|^{q_0} dX + \int_{\RN} |(f_k)_2(X)|^{q_\infty} dX\right)^{\frac{1}{q_\infty}} \right\}\nonumber\\
&\geq \min\left\{\left(\int_{\RN} |(f_k)_1(X)|^{q_\infty} dX + \int_{\RN} |(f_k)_2(X)|^{q_\infty} dX\right)^{\frac{1}{q_0}},\right.\nonumber\\
&\hspace{1cm}\left. \left(\int_{\RN} |(f_k)_1(X)|^{q_\infty} dX + \int_{\RN} |(f_k)_2(X)|^{q_\infty} dX\right)^{\frac{1}{q_\infty}} \right\}\nonumber\\
&=\min\left\{\|f_k\|^{\frac{q_\infty}{q_0}}_{q_\infty}, \|f_k\|_{q_\infty} \right\},\nonumber
\end{align}
where we used \eqref{order} and the fact that $(f_k)_1(f_k)_2\equiv 0$. On the other hand, denoting $c(q_0,q_\infty)=\min\{x^{\frac{1}{q_0}}+y^{\frac{1}{q_\infty}}\,:\, x,y\geq 0,\mbox{ and }x+y=1\}$, we have the validity of the simple inequality 
$$x^{\frac{1}{q_0}}+y^{\frac{1}{q_\infty}}\geq c(q_0,q_\infty) \min\left\{(x+y)^{\frac{1}{q_0}}, (x+y)^{\frac{1}{q_\infty}} \right\}\qquad\mbox{ for all }x,y\geq 0,
$$
which implies
\begin{align}\label{dalbassounodue}
\|(f_k)_1\|_{q_0}+ \|(f_k)_2\|_{q_\infty} \geq& c(q_0,q_\infty) \min\left\{\left(\int_{\RN} |(f_k)_1(X)|^{q_0} dX + \int_{\RN} |(f_k)_2(X)|^{q_\infty} dX\right)^{\frac{1}{q_0}},\right.\\
&\hspace{2cm}\left. \left(\int_{\RN} |(f_k)_1(X)|^{q_0} dX + \int_{\RN} |(f_k)_2(X)|^{q_\infty} dX\right)^{\frac{1}{q_\infty}} \right\}.\nonumber
\end{align}
Putting together \eqref{eccolaa} with \eqref{weirdconstant}-\eqref{dalbassounodue}, we obtain
$$
\mathscr N^{\A}_{2s,1}(f_k) \geq \frac{\G(1-s)}{4s S_{1,s}}\left(\|(f_k)_1\|_{q_0}+ \|(f_k)_2\|_{q_\infty}\right) \geq \frac{\G(1-s) c(q_0,q_\infty)}{4s S_{1,s}} \min\left\{\|f_k\|^{\frac{q_\infty}{q_0}}_{q_\infty}, \|f_k\|_{q_\infty} \right\}
$$
for any $k\in\N$. By letting $k\to\infty$, since $f_k\to \In$ in every $L^p$-space and also in $\mathfrak B^{\A}_{2s,1}$ (see \cite[Proposition 3.2, Step I]{BGT}), we deduce the desired
$$
\mathfrak P^{\A}_{s}(E) \geq\ \frac{\G(1-s) c(q_0,q_\infty)}{4s S_{1,s}} \min\left\{|E|^{\frac{1}{q_0}}, |E|^{\frac{1}{q_\infty}}\right\}.
$$
\end{proof}

\section*{Acknowledgments}
\noindent We wish to thank St\'ephane Menozzi, Andrea Pascucci, and Sergio Polidoro for the organization and the kind invitation to the conference ``{\em Kolmogorov operators and their applications}" held in June 2022 in Cortona.

Both authors are supported in part by a Progetto SID: ``Aspects of nonlocal operators via fine properties of heat kernels", University of Padova, 2022. The first author has also been partially supported by a Visiting Professorship at the Arizona State University. The second author has been partially supported by the Gruppo Nazionale per l'Analisi Matematica, la Probabilit\`a e le loro Applicazioni (GNAMPA) of the Istituto Nazionale di Alta Matematica (INdAM).


\bibliographystyle{amsplain}

\end{document}